\newcommand{\bwedge}{\raisebox{0.2ex}{${\textstyle \bigwedge}$}}
\numberwithin{equation}{section}
\newtheorem{te}{Theorem}
\newtheorem*{te*}{Theorem}
\begin{document}

\title{The DFR-Algebra for Poisson Vector Bundles}
\author{Michael Forger%
        \thanks{Partly supported by CNPq
                (Conselho Nacional de Desenvolvimento
                 Cient\'{\i}fico e Tecno\-l\'o\-gico), Brazil; \newline
                E-mail: \textsf{forger@ime.usp.br}}
        ~~and~~
        Daniel V.\ Paulino%
        \thanks{Supported by FAPESP
                (Funda\c{c}\~ao de Amparo \`a Pesquisa do
                 Estado de S\~ao Paulo), Brazil; \newline
                 E-mail: \textsf{dberen@ime.usp.br}}
        }
\date{\small{Departamento de Matem\'atica Aplicada \\
      Instituto de Matem\'atica e Estat\'{\i}stica \\
      Universidade de S\~ao Paulo \\
      Caixa Postal 66281 \\
      BR--05314-970~ S\~ao Paulo, S.P., Brazil}}
\maketitle

\thispagestyle{empty}

\begin{abstract}
 \noindent
 The aim of the present paper is to present the construction of a general
 family of $C^*$-algebras that includes, as a special case, the ``quantum
 space-time algebra'' first introduced by Doplicher, Fredenhagen and Roberts.
 To this end, we first review, within the $C^*$-algebra context, the Weyl-Moyal
 quantization procedure on a fixed Poisson vector space (a vector space
 equipped with a given bivector, which may be degenerate).
 We then show how to extend this construction to a Poisson vector bundle
 over a general manifold~$M$, giving rise to a $C^*$-algebra which is also
 a module over $C_0(M)$.
 Apart from including the original DFR-model, this method yields a
``fiberwise quantization'' of general Poisson manifolds.
\end{abstract}

\begin{flushright}
 \parbox{12em}{
  \begin{center}
   Universidade de S\~ao Paulo \\
   RT-MAP-1201 \\
   January 2012
  \end{center}
 }
\end{flushright}

\newpage

\setcounter{page}{1}

\section{Introduction} 
 
In a seminal paper published in 1995~\cite{DFR}, Doplicher, Fredenhagen and
Roberts (DFR) have introduced a special $C^*$-algebra to provide a model for
space-time in which localization of events can no longer be performed with
arbitrary precision: they refer to it as a model of ``quantum space-time''.
Apart from being beautifully motivated, their construction is mathematically
simple: it starts from a symplectic form $\sigma$ on Minkowski space and
considers the corresponding canonical commutation relations (CCRs), which
can be viewed as a representation of a well-known finite-dimensional
nilpotent Lie algebra, the Heisenberg (Lie) algebra.
More precisely, the CCRs appear in the Weyl form, i.e., we are really
dealing with an irreducible unitary representation of the Heisenberg
group.
This in turn generates a $C^*$-algebra that we propose to call
the Heisenberg $C^*$-algebra, related to the original representation
through Weyl quantization, that is, via the Weyl-Moyal star product.
Due to von Neumann's theorem about the uniqueness of the Sch\"{o}dinger
representation, it is unique up to isomorphism.

The main novelty in the DFR construction is that the symplectic form
$\sigma$ defining the Heisenberg algebra is treated as a \emph{variable}.
In this way, one is able to reconcile the construction with the principle
of relativistic invariance: since Minkowski space $\mathbb{R}^4$ has no
distinguished symplectic structure, the only way out is to consider,
simultaneously, \emph{all} possible symplectic structures on Minkowski
space that can be obtained from a fixed one, say $\sigma_0$, by the
action of the Lorentz group.
Now the orbit of $\sigma_0$ under the action of the Lorentz group
is $\, TS^2 \times \mathbb{Z}_2$, thus explaining the origin of
the extra dimensions that appear in this approach.%
\footnote{Note that the factor $\mathbb{Z}_2$ comes from the fact
that we are dealing with the full Lorentz group; it would be absent if
we dropped (separate) invariance under parity $P$ or time reversal $T$.}

Assuming the symplectic form $\sigma$ to vary over the orbit of some
fixed representative $\sigma_0$ produces not just a single Heisenberg
$C^*$-algebra but an entire bundle of $C^*$-algebras over that orbit,
with the Heisenberg $C^*$-algebra for $\sigma_0$ as typical fiber.
The continuous sections of that bundle vanishing at infinity then
define a ``section'' $C^*$-algebra, which carries a natural action
of the Lorentz group induced from its natural action on the underlying
bundle of $C^*$-algebras (which moves base points as well as fibers).
Besides, this ``section'' $C^*$-algebra is also a $C^*$-module over the
``scalar'' $C^*$-algebra $C_0(M)$ of continuous functions on~$M$ vanishing
at infinity.
In the special case considered by DFR, the underlying bundle turns out
to be globally trivial, which in view of von Neumann's theorem implies
a classification result on irreducible as well as on Lorentz covariant
representations of the DFR-algebra.

In retrospect, it is clear that when formulated in this geometrically
inspired language, the results of~\cite{DFR} yearn for generalization~--
even if only for purely mathematical reasons.

From a more physical side, one of the original motivations of the present
work was an idea of J.C.A.~Barata, who proposed to look for a clearer
geo\-metrical interpretation of the classical limit of the DFR-algebra
in terms of coherent states, as developed by K.~Hepp~\cite{HE}.
This led the second author to investigate possible generalizations of
the DFR-construction to other vector spaces than four-dimensional
Minkowski space and other Lie groups than the Lorentz group in four
dimensions.
Isolating the essential hypo\-theses underlying the DFR-construction
from the marginal ones and using the reformulation contained in~%
\cite{GP}, we finally came up with the construction presented in
this paper.%
\footnote{As was kindly pointed out by D.~Bahns, our construction may
 be reduced in some sense to Rieffel's theory of strict deformations,
 as has happened with the original DFR construction~\cite{RFF}.
 Since such a reduction has not yet been worked ou explicitly and
 since the methods used here are quite different, we still believe
 our work may be useful.}
As it turned out, we were even able to eliminate the hypothesis of
nondegeneracy of the underlying symplectic form on each fiber and
thus end up with a version that works even for general Poisson
tensors, which we consider to be a rather dramatic generalization,
taking into account the degree to which Poisson manifolds are more
general than symplectic manifolds.

The original question of how to define the classical limit in the
general context outlined below will be pursued elsewhere, but we
believe that the mathematical construction as presented here is
of independent interest, going beyond the physical motivations
of the original DFR paper.

\section{The Heisenberg $C^*$-Algebra for \\
         Poisson Vector Spaces}

Let $V$ be a Poisson vector space, i.e., a real vector space of
dimension~$n$, say, equipped with a fixed bivector $\sigma$ of
rank $2r$; in other words, the dual $V^*$ of $V$ is a
presymplectic vector space.%
\footnote{Note that we do \emph{not} require $\sigma$ to be
non-degenerate.}
It gives rise to an $(n+1)$-dimensional Lie algebra
$\mathfrak{h}_{\sigma}$ which is a one-dimensional
central extension of the abelian Lie algebra~$V^*$
defined by the cocycle~$\sigma$ and will be called
the \emph{Heisenberg algebra} or, more explicitly,
the \emph{Heisenberg Lie algebra} (associated to $V^*$
and~$\sigma$): as a vector space, $\mathfrak{h}_\sigma
= V^* \oplus \mathbb{R}$, with commutator given by
\begin{equation} \label{eq:HACOMM}
 [(\xi,\lambda),(\eta,\mu)]~=~(0 \,,\, \sigma(\xi,\eta))
 \qquad \mathrm{for}~~\xi,\eta \in V^* \,,\,
 \lambda,\mu \in \mathbb{R}~.
\end{equation}
Associated with this Lie algebra is the \emph{Heisenberg group}
$H_\sigma$: as a manifold, $H_\sigma = V^* \times \mathbb{R}$,
with product (written additively) given by
\begin{equation} \label{eq:HGPROD}
 (\xi,\lambda) \, (\eta,\mu)~=~
 \bigl( \xi+\eta \,,\,
        \lambda + \mu - {\textstyle \frac{1}{2}} \, \sigma(\xi,\eta) \bigr)
 \qquad \mathrm{for}~~\xi,\eta \in V^\ast \,,\,
                      \lambda,\mu \in \mathbb{R}~.
\end{equation}

As is well known, the \emph{canonical commutation relations}
\begin{equation} \label{eq:CCR1}
 \bigl[ \mathbf{P}^j , \mathbf{Q}^k \bigr]~
 = \; - \, i \, \delta^{jk}
 \qquad \mathrm{for}~~1 \leqslant j,k \leqslant r
\end{equation}
can formally be viewed as a representation $\dot{\pi}_\sigma$
of the Heisenberg algebra $\mathfrak{h}_\sigma$ by essentially
skew adjoint operators in a certain Hilbert space, namely
$L^2(\mathbb{R}^{n-r})$, as follows.
First, introduce a \emph{Darboux basis} for~$\sigma$,
that is, a basis $\, \{v_1,\ldots,v_n\} \,$ of~$V$,
with corresponding dual basis $\, \{v^1,\ldots,v^n\} \,$
of~$V^*$, in which $\sigma$ takes the form represented
by the matrix
\begin{equation} \label{eq:STANF1}
 \left( \begin{array}{ccc}
           0  & 1_r & 0 \\
         -1_r &  0  & 0 \\
           0  &  0  & 0
        \end{array} \right) \, ,
\end{equation}
that is, the only non-vanishing matrix elements of~$\sigma$ are
\begin{equation} \label{eq:STANF2}
 \sigma(v^j,v^{r+k})~=~\delta^{jk}~= \; - \, \sigma(v^{r+j},v^k)
 \qquad \mathrm{for}~~1 \leqslant j,k \leqslant r~.
\end{equation}
Such a basis can always be constructed by a symplectic analogue
of Gram-Schmidt ortho\-gonalization; see, for example, \cite%
[Prop.~3.1.2, pp.~162-164]{AM}.
Then, abbreviating $\dot{\pi}_\sigma(\xi,0)$
to $\dot{\pi}_\sigma(\xi)$, set
\begin{equation} \label{eq:REPHA1}
 \begin{array}{c}
  \dot{\pi}_\sigma(v^j)~=~i \, \mathbf{P}^j
  \qquad \mathrm{for}~~1 \leqslant j \leqslant r~, \\[1mm]
  \dot{\pi}_\sigma(v^{r+j})~=~i \, \mathbf{Q}^j
  \qquad \mathrm{for}~~1 \leqslant j \leqslant r~, \\[1mm]
  \dot{\pi}_\sigma(v^{2r+k})~=~i \, \mathbf{Q}^{r+k}
  \qquad \mathrm{for}~~1 \leqslant k \leqslant n-2r~, \\[1mm]
  \dot{\pi}_\sigma(0,1)~= \; i \, 1~,
 \end{array}
\end{equation}
where the $\mathbf{P}$'s and $\mathbf{Q}$'s are the usual
position and momentum operators of quantum mechanics in~%
$L^2(\mathbb{R}^{n-r})$.
Since these operators have a common dense invariant domain of
analytic vectors~\cite{NE} (take, for example, the Schwartz
space~$\mathscr{S}(\mathbb{R}^{n-r})$), this representation can
be exponentiated to a strongly continuous, unitary representation
of the Heisenberg group which will be denoted by $\pi_\sigma$:
these are the canonical commutation relations in \emph{Weyl form}
which, after abbreviating $\pi_\sigma(\xi,0)$ to $\pi_\sigma(\xi)$,
can be written as
\begin{equation} \label{eq:CCR2}
 \pi_\sigma(\xi) \, \pi_\sigma(\eta)~
 =~e^{-\frac{i}{2} \sigma(\xi,\eta)} \, \pi_\sigma(\xi+\eta)~.
\end{equation}
Note that although this explicit definition of~$\dot{\pi}_\sigma$
and (hence) of~$\pi_\sigma$ depends on the choice of Darboux basis,
the resulting representation itself does not, up to unitary
equivalence.
Moreover, both representations are faithful, since they are
restrictions of faithful representations of a larger (non-%
degenerate) Heisenberg algebra/group to a subalgebra/subgroup.

In a second step, we extend this whole construction to the
$C^*$-algebra setting.
To this end, we construct a linear map
\begin{equation} \label{eq:WQUANT1}
 \begin{array}{cccc}
  W_\sigma: & \mathscr{S}(V) & \longrightarrow &
              B(L^2(\mathbb{R}^{n-r})) \\[1mm]
            &       f        &   \longmapsto   &
              W_\sigma f
 \end{array}
\end{equation}
from the space of Schwartz test functions on~$V$ to the
space of bounded linear operators on the Hilbert space~%
$L^2(\mathbb{R}^{n-r})$ by setting
\begin{equation} \label{eq:WQUANT2}
 W_\sigma f~=~\int_{V^*} d\xi~\check{f}(\xi) \; \pi_\sigma(\xi)~,
\end{equation}
which is to be compared with
\begin{equation} \label{eq:FOURT1}
 f(x)~=~\int_{V^*} d\xi~\check{f}(\xi) \; e^{i\langle\xi,x\rangle}~,
\end{equation}
where $\check{f}$ is the inverse Fourier transform of~$f$,
\begin{equation} \label{eq:FOURT2}
 \check{f}(\xi)~=~\frac{1}{(2\pi)^n} \;
                  \int_V dx~f(x) \; e^{-i\langle\xi,x\rangle}~.
\end{equation}
An argument analogous to the one that proves injectivity of the
Fourier transform (on Schwartz space) can be used to show that
$W_\sigma$ is faithful, and an explicit calculation gives
\begin{equation} \label{eq:WMOY1}
 W_\sigma f~W_\sigma g~=~W_\sigma (f \star_\sigma g)
 \qquad \mathrm{for}~~f,g \in \mathscr{S}(V)~,
\end{equation}
where $\star_\sigma$ denotes the \emph{Weyl-Moyal star product},
given by
\begin{equation} \label{eq:WMOY2}
 \begin{array}{c}
 {\displaystyle
  (f \star_\sigma g)(x)~=~\int_{V^*} d\xi~e^{i\langle\xi,x\rangle}
  \int_{V^*} d\eta~\check{f}(\eta) \, \check{g}(\xi-\eta) \;
  e^{-\frac{i}{2} \sigma(\xi,\eta)}} \\[3ex]
  \mathrm{for}~~f,g \in \mathscr{S}(V)~.
 \end{array}
\end{equation}
In other words, using the Weyl-Moyal star product, together with
the standard involution of pointwise complex conjugation, we can
turn $\mathscr{S}(V)$ into a $*$-algebra.

This $*$-algebra admits various norms.
The naive choice would be the sup norm or $L^\infty$ norm,
since obviously, $\mathscr{S}(V) \subset C_0(V)$.
But this is a $C^*$-norm for the usual pointwise product,
not the Weyl-Moyal star product.
However, the above construction shows that there is a natural
$C^*$-norm (which by general nonsense is unique), namely
the operator norm for the above \emph{Weyl quantization map}
$W_\sigma$: the completion of~$\mathscr{S}(V)$ with respect
to this $C^*$-norm will be denoted by $\mathcal{E}_\sigma$.
Observing that this is still a $C^*$-algebra without unit,
the final step consists in passing to its \emph{multiplier
algebra} $\, \mathcal{H}_\sigma = M(\mathcal{E}_\sigma)$:
these are the algebras that we propose to call the \emph%
{Heisenberg $C^*$-algebra}, \emph{without unit} or \emph%
{with unit}, respectively, (associated to $V^*$ and~$\sigma$).
It~is then clear that $\pi_\sigma$ yields an embedding of
the Heisenberg group~$H_\sigma$ into the group of unitaries
of the Heisenberg $C^*$-algebra $\mathcal{H}_\sigma$.

For later use, we note that $\mathcal{E}_\sigma$ contains two
natural dense subspaces which do not depend on~$\sigma$: one
is of course the Schwartz space $\mathscr{S}(V)$ that we started
with, while the other is its completion $\check{L}^1(V)$ with
respect to the $\check{L}^{1}$-norm, that is, the $L^1$-norm
of the inverse Fourier transform: due to the inequalities
$\, \|f\|_\infty \leqslant \|\check{f}\|_1^{} \,$ and
$\, \|W_\sigma f\| \leqslant \|\check{f}\|_1^{}$ \linebreak
which follow immediately from equations~(\ref{eq:FOURT1})
and~(\ref{eq:WQUANT2}), this norm induces a finer topo\-logy
on~$\mathscr{S}(V)$ than that of $C_0(V)$ and also than
that of $\mathcal{E}_\sigma$, so that by completion we get
the inclusions~$\, \check{L}^1(V) \subset C_0(V) \,$ and
$\, \check{L}^1(V) \subset \mathcal{E}_\sigma$.
Moreover, with respect to its own norm (and with respect
to the Weyl-Moyal star product and the standard involution
of pointwise complex conjugation), $\check{L}^1(V)$ is even
a Banach $*$-algebra, and $\mathcal{E}_\sigma$ is its
$C^*$-completion.

Turning to representations of the various mathematical entities
involved, we mention first of all that representations of~%
$\mathcal{H}_\sigma$ correspond uniquely to non-degenerate
representations of~$\mathcal{E}_\sigma$.%
\footnote{A representation of an algebra~$A$ on a vector space~$V$
is called non-degenerate if there is no non-zero vector in~$V$ that
is annihilated by all elements of~$A$. Obviously, if $A$ has a unit,
every representation of~$A$ is non-degenerate. Also, irreducible
representations are always non-degenerate. Finally, it can be
proved that any non-degenerate representation of an algebra~$A$
extends uniquely to a representation of its multiplier algebra
$M(A)$.}
Similarly, strongly continuous unitary representations
of~$H_\sigma$ correspond uniquely to representations of~%
$\mathfrak{h}_\sigma$ which in~\cite{DFR} are called regular:
according to Nelson's theo\-rem~\cite{NE}, these are
representations by essentially skew adjoint operators
with a common dense invariant domain of analytic vectors.
Finally, it is obvious that the former induce the latter
(just by restriction from~$\mathcal{H}_\sigma$ to~$H_\sigma$),
while the converse follows from an argument similar to one
already used above: given a strongly continuous unitary
representation~$\pi$  of~$H_\sigma$ on some Hilbert space
$\mathfrak{H}$, the linear map
\begin{equation} \label{eq:WQUANT3}
 \begin{array}{cccc}
  W_\pi: & \mathscr{S}(V) & \longrightarrow &
           B(\mathfrak{H}) \\[1mm]
         &       f        &   \longmapsto   &
           W_\pi f
 \end{array}
\end{equation}
defined by
\begin{equation} \label{eq:WQUANT4}
 W_\pi f~=~\int_{V^*} d\xi~\check{f}(\xi) \; \pi(\xi)~,
\end{equation}
extends to a representation of~$\mathcal{E}_\sigma$ which,
using an argument similar to the one that guarantees
injectivity of the map~(\ref{eq:WQUANT1}), can be shown to be
non-degenerate.

All these constructions seem to be well known when $\sigma$
is \emph{non-degenerate}: in that case, the representation
$\pi_\sigma$ is \emph{irreducible} and, according to one of
von Neumann's famous theorems, is the \emph{unique} such
representation, generally known as the \emph{Schr\"odinger
representation of the canonical commutation relations}.

In the degenerate case, i.e., when $\sigma$ has a non-trivial null
space, denoted by $\, \ker \sigma$, we can use the following trick:
choose a subspace $V'$ of~$V^*$ complementary to $\, \ker \sigma$,
so that the restriction $\sigma'$ of~$\sigma$ to~$\, V' \times V' \,$
is non-degenerate, and introduce the corresponding Heisenberg algebra
$\, \mathfrak{h}_{\sigma'} = V' \oplus \mathbb{R} \,$ and Heisenberg
group $\, H_{\sigma'} = V' \times \mathbb{R} \,$ to decompose the
original ones into the direct sum $\, \mathfrak{h}_\sigma = \ker
\sigma \oplus \mathfrak{h}_{\sigma'} \,$ of two commuting ideals
and $\, H_\sigma = \ker \sigma \times H_{\sigma'} \,$ of two
commuting normal subgroups.%
\footnote{As is common practice in the abelian case, we consider
the same vector space $\, \ker \sigma$ as an abelian Lie algebra
in the first case and as an (additively written) abelian Lie group
in the second case, so that the exponential map becomes the identity.}
It follows that every (strongly continuous unitary) representation
of~$H_\sigma$ is the tensor product of a (strongly continuous unitary)
representation of~$\, \ker \sigma$ and a (strongly continuous unitary)
representation of~$H_{\sigma'}$, where the first is irreducible if and
only if each of the last two is irreducible.
Now since $\, \ker \sigma$ is abelian, its irreducible representations
are one-dimensional and given by their character, which proves the
following
\begin{te}[Classification of irreducible representations] \label{vr}
 With the notation above, the strongly continuous, unitary, irreducible
 representations of the Heisenberg group~$H_\sigma$, or equivalently, the
 irreducible representations of the Heisenberg $C^*$-algebras without
 unit\/~$\mathcal{E}_\sigma$ or with unit\/~$\mathcal{H}_\sigma$, are classified
 by their \textbf{highest weight}, which is a vector $v$ in~$V$, or more
 precisely, its class~$[v]$ in the quotient space $V/(\ker \sigma)^\perp$,
 such that
 \[
  \pi_{[v]}(\xi,\eta)~=~e^{i\langle\xi,v\rangle} \, \pi_{\sigma'}(\eta)
  \qquad \mathrm{for}~~\xi \in \ker \sigma \,,\,
         \eta \in H_{\sigma'}~,
 \]
 where $\pi_{\sigma'}$ is of course the Schr\"odinger representation of~%
 $H_{\sigma'}$.
\end{te}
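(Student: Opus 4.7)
The plan is to exploit the decomposition $H_\sigma \cong \ker\sigma \times H_{\sigma'}$ of $H_\sigma$ into two commuting normal subgroups that is already in place just before the statement, and to reduce the classification to two elementary pieces. Since $H_\sigma$ and $H_{\sigma'}$ are connected, simply connected, nilpotent Lie groups, both are of Type~I, so Mackey's tensor-product theorem for direct products applies: every strongly continuous unitary irreducible representation $\pi$ of $H_\sigma$ is (unitarily equivalent to) an outer tensor product $\chi \boxtimes \rho$, with $\chi$ and $\rho$ irreducible strongly continuous unitary representations of $\ker\sigma$ and $H_{\sigma'}$, respectively. The equivalences recalled in the paragraphs preceding the theorem (strongly continuous unitary representations of $H_\sigma$ are in bijection with regular representations of $\mathfrak{h}_\sigma$ and, via $W_\pi$, with non-degenerate representations of $\mathcal{E}_\sigma$, hence of $\mathcal{H}_\sigma$) then ensure that classifying one of these families classifies all of them, so it suffices to work at the group level.

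Next I would handle each factor separately. The subgroup $\ker\sigma$ is an additive finite-dimensional real vector space, so Schur forces $\chi$ to be one-dimensional and Pontryagin duality identifies it with a continuous character of the form $\xi \mapsto e^{i\langle\xi,v\rangle}$ for some element $v$ of the real dual $(\ker\sigma)^*$. The evaluation pairing $V^* \times V \to \mathbb{R}$ restricts to a nondegenerate pairing of $\ker\sigma$ with $V/(\ker\sigma)^\perp$, yielding the canonical isomorphism $(\ker\sigma)^* \cong V/(\ker\sigma)^\perp$, so $v$ is naturally labelled by its class $[v] \in V/(\ker\sigma)^\perp$. For the other factor, the central element $(0,1) \in H_{\sigma'}$ is sent by $\rho$ to a scalar multiple of the identity (Schur again), and the normalization built into the construction of $\pi_{\sigma'}$ pins this scalar down to be $e^i$; with the central character thus fixed, von~Neumann's uniqueness theorem forces $\rho$ to be unitarily equivalent to the Schrödinger representation $\pi_{\sigma'}$. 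Re-assembling the outer tensor product gives exactly the displayed formula.

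The main obstacle is the first step: obtaining the tensor-product decomposition of an arbitrary irreducible representation of the direct product. For a self-contained argument one can proceed by hand via joint spectral theory: the restriction $\pi|_{\ker\sigma}$ takes values in a commuting family of unitaries and therefore, by the SNAG theorem for locally compact abelian groups, produces a projection-valued measure on $(\ker\sigma)^*$; this measure commutes with $\pi|_{H_{\sigma'}}$ because the two subgroups commute, and irreducibility of $\pi$ together with Schur force the measure to be concentrated at a single character $[v] \in (\ker\sigma)^*$, on which $\ker\sigma$ acts by $e^{i\langle\cdot,v\rangle}$ while $H_{\sigma'}$ acts irreducibly on the whole representation space. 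Once this factorization has been secured, what remains is routine bookkeeping combined with the two classical uniqueness results cited above.
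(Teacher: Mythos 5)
Your proposal follows essentially the same route as the paper: the paper's proof is precisely the decomposition $H_\sigma = \ker\sigma \times H_{\sigma'}$ into commuting factors, the observation that an irreducible representation factors as a character of the abelian part (identified with a class $[v]\in V/(\ker\sigma)^\perp$) tensored with an irreducible representation of $H_{\sigma'}$, and von Neumann's uniqueness theorem for the latter. The only difference is that you supply a justification (Mackey/SNAG plus Schur) for the tensor-product factorization that the paper simply asserts, so your argument is a correct, slightly more detailed version of the same proof.
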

\noindent
It should be noted, however, that the representation $\pi_\sigma$ used to
construct the Heisenberg $C^*$-algebras is of course very far from being
irreducible.

\section{The DFR-Algebra for Poisson Vector Bundles}

Let $E$ be a Poisson vector bundle, i.e., a (smooth) real vector bundle of
fiber dimension~$n$, say, over a (smooth) manifold~$M$, with typical fiber~%
$\mathbb{E}$, equipped with a fixed (smooth) bivector field $\sigma$; in other
words, the dual $E^*$ of $E$ is a (smooth) presymplectic vector bundle.%
\footnote{Note that we do \emph{not} require $\sigma$ to be non-degenerate
or even to have constant rank.}
Then it is clear that we can apply all constructions of the previous section
to each fiber.
The question to be addressed in this section is how the results can be glued
together along the base manifold~$M$ and to describe the resulting global
objects.

Starting with the collection of Heisenberg algebras $\mathfrak{h}_{\sigma(m)}$
($m \in M$), we note first of all that these fit together into a (smooth) real
vector bundle over~$M$, which is just the direct sum of~$E^*$ and the trivial
line bundle $\, M \times \mathbb{R} \,$ over~$M$.
The non-trivial part is the commutator, which is defined by equation~%
(\ref{eq:HACOMM}) applied to each fiber, turning this vector bundle into
a \emph{totally intransitive Lie algebroid} \cite[Def.~3.3.1, p.~100]{MK}
which we shall call the \emph{Heisenberg algebroid} associated to~%
$(E,\sigma)$ and denote by $\mathfrak{h}(E,\sigma)$: it will even be
a \emph{Lie algebra bundle} \cite[Def.~3.3.8, p.~104]{MK} if and only
if $\sigma$ has constant rank.
Of course, spaces of sections (with certain regularity properties)
of~$\mathfrak{h}(E,\sigma)$ will then form (infinite-dimensional)
Lie algebras with respect to the (pointwise defined) commutator,
but the correct choice of regularity conditions is a question of
functional analytic nature to be dictated by the problem at hand.

Similarly, considering the collection of Heisenberg groups~$H_{\sigma(m)}$
($m \in M$), we note that these fit together into a (smooth) real fiber
bundle over~$M$, which is just the fiber product of~$E^*$ and the trivial
line bundle $M \times \mathbb{R}$.
Again, the non-trivial part is the product, which is defined by equation~%
(\ref{eq:HGPROD}) applied to each fiber, turning this fiber bundle into
a \emph{totally intransitive Lie groupoid} \cite[Def.~1.1.3,
p.~5 \& Def.~1.5.9, p.~32]{MK} which we shall call the \emph{Heisenberg
groupoid} associated to~$(E,\sigma)$ and denote by $H(E,\sigma)$: it will
even be a \emph{Lie group bundle} \cite[Def.~1.1.19, p.~11]{MK} if and
only if $\sigma$ has constant rank.
And again, spaces of sections (with certain regularity properties)
of~$H(E,\sigma)$ will form (infinite-dimensional) Lie groups with
respect to the (pointwise defined) product, but the correct choice
of regularity conditions is a question of functional analytic nature
to be dictated by the problem at hand.

An analogous strategy can be applied to the collection of Heisenberg
$C^*$-algebras $\mathcal{E}_{\sigma(m)}$ and $\mathcal{H}_{\sigma(m)}$
($m \in M$), but the details are somewhat intricate since the fibers
are now (infinite-dimensional) $C^*$-algebras which may depend on the
base point in a discontinuous way, since the rank of~$\sigma$ is allowed
to jump.
Concretely, our goal is to fit the collections of Heisenberg $C^*$-algebras
$\mathcal{E}_{\sigma(m)}$ and $\mathcal{H}_{\sigma(m)}$ into continuous fields
of $C^*$-algebras over~$M$ \cite[Def.~10.3.1, p.~218]{DI},
%\footnote{The term ``field'' has been put into quotation marks because it
%has several other connotations, in mathematics as well as in physics, but
%we are in lack of a better proposal.}
\cite[Def.~2.10, pp.~68-69]{GBVF} whose continuous sections (subject to
appropriate decay or boundedness conditions at infinity) yield spaces
$\mathscr{E}(E,\sigma)$ and $\mathscr{H}(E,\sigma)$ \linebreak

\pagebreak

\noindent
which are not only again $C^*$-algebras but also modules over the function
algebras $C_0(M)$ and/or~$C_b(M)$.%
\footnote{As usual, we denote by $C_0(M)$ the commutative $C^*$-algebra
of continuous functions on~$M$ that vanish at infinity and by $C_b(M)$
the commutative $C^*$-algebra of bounded continuous functions on~$M$;
as is well known, the latter is the multiplier algebra of the former.}

To do so, we start by noting that there is a naturally defined smooth
vector bundle over~$M$ which we shall denote by $\check{\mathcal{L}}^1$:
its fibers are just the Banach spaces of $\check{L}^1$-functions on the
fibers of~$E$, i.e.
\begin{equation} \label{eq:BANBUN}
 \check{\mathcal{L}}^1~= \bigcup_{m \in M}^{\scriptscriptstyle{\bullet}}
                         \check{\mathcal{L}}_m^1
 \qquad \mathrm{where} \qquad
 \check{\mathcal{L}}_m^1~=~\check{L}^1(E_m)~.
\end{equation}
(See~\cite{LA} for the theory of vector bundles whose typical fiber
is a fixed Banach space.)
Note that as a vector bundle, $\check{\mathcal{L}}^1$ does \emph{not}
depend on $\sigma$ and is locally trivial, but of course, the fiberwise
Weyl-Moyal star product on this vector bundle does depend on~$\sigma$
and thus will be locally trivial, turning $\check{\mathcal{L}}^1$ into
a smooth bundle of Banach $*$-algebras, if and only if $\sigma$ has
constant rank.%
\footnote{This is exactly the same situation as for the fiberwise
commutator in the Heisenberg algebroid or the fiberwise product in
the Heisenberg grupoid.}

But at any rate, it is clear that $\check{\mathcal{L}}^1$ is a
continuous field of Banach $*$-algebras over~$M$ and that the
space $C_0(\check{\mathcal{L}}^1)$ of continuous sections of~%
$\check{\mathcal{L}}^1$ vanishing at infinity is a Banach
$*$-algebra which we shall denote by $\mathscr{E}^0(E,\sigma)$,
with norm given by
\begin{equation} \label{eq:DFRN1}
 \| \varphi \|_{\mathscr{E}^0}~
 = \; \sup_{m \in M} \| \varphi(m) \|_{\check{L}^1(E_m)}
 \qquad \mathrm{for}~~\varphi \in \mathscr{E}^0(E,\sigma)~.
\end{equation}
%It contains, as a dense subspace, the space
%$C_0^\infty(\check{\mathcal{L}}^1)$ of smooth sections
%of~$\check{\mathcal{L}}^1$ vanishing at infinity, which
%we shall denote by $\mathscr{E}^\infty(E,\sigma)$.
Equally clear is that $\mathscr{E}^0(E,\sigma)$ is not only a
Banach $*$-algebra but also a module over the function algebra
$C_0(M)$ (and even over the function algebra~$C_b(M)$), with
\begin{equation} \label{eq:DFRMOD1}
 \| f \varphi \|_{\mathscr{E}^0}~
 \leqslant \; \|f\|_\infty \, \| \varphi \|_{\mathscr{E}^0}
 \qquad \mathrm{for}~~f \in C_b(M) \,,\, \varphi \in \mathscr{E}^0(E,\sigma)~,
\end{equation}
and for every point $m$ in~$M$, the evaluation map (Dirac delta function)
at~$m$,
\begin{equation} \label{eq:DELTA1}
 \begin{array}{cccc}
  \delta_m : & \mathscr{E}^0(E,\sigma) = C_0(\check{\mathcal{L}}^1)
             & \longrightarrow & \check{L}^1(E_m) \\[1mm]
             & \varphi
             &   \longmapsto   &    \varphi(m)
 \end{array}
\end{equation}
is a Banach $*$-algebra homomorphism which, due to local triviality, is onto.
Composing it with the Weyl quantization map $W_{\sigma(m)}$ (naturally extended
from $\mathscr{S}(E_m)$ to $\check{L}^1(E_m)$, by continuity) gives
a $*$-representation $\, W_m = W_{\sigma(m)} \circ \delta_m \,$ of~%
$\mathscr{E}^0(E,\sigma)$ by bounded operators on some Hilbert space,
and since the family of all these $*$-representations is separating
(i.e., $\bigcap_{m \in M} \ker W_m = \{0\}$), it provides a $C^*$-norm
on $\mathscr{E}^0(E,\sigma)$, explicitly given by
\begin{equation} \label{eq:DFRN2}
 \begin{array}{c}
  {\displaystyle
   \|\varphi\|_{\mathscr{E}}^{}~
   = \; \sup_{m \in M} \| \varphi(m) \|_{\mathcal{E}_{\sigma(m)}}^{}~
   = \; \sup_{m \in M} \| W_{\sigma(m)}(\varphi(m)) \|} \\[2ex]
  \mathrm{for}~~\varphi \in \mathscr{E}^0(E,\sigma)~.
 \end{array}
\end{equation}
The completion of~$\mathscr{E}^0(E,\sigma)$ with respect to
this $C^*$-norm will be denoted by $\mathscr{E}(E,\sigma)$,
and we have
\begin{equation} \label{eq:DFRN3}
 \|\varphi\|_{\mathscr{E}}^{}~\leqslant~\|\varphi\|_{\mathscr{E}^0}
 \qquad \mathrm{for}~~\varphi \in \mathscr{E}^0(E,\sigma)~,
\end{equation}
as well as
\begin{equation} \label{eq:DFRN4}
 \begin{array}{c}
  {\displaystyle
   \|\varphi\|_{\mathscr{E}}^{}~
   = \; \sup_{m \in M} \| \varphi(m) \|_{\mathcal{E}_{\sigma(m)}}^{}~
   = \; \sup_{m \in M} \| W_{\sigma(m)}(\varphi(m)) \|} \\[2ex]
  \mathrm{for}~~\varphi \in \mathscr{E}(E,\sigma)~.
 \end{array}
\end{equation}
Again, $\mathscr{E}(E,\sigma)$ is not only a $C^*$-algebra but also a
module over the function algebra $C_0(M)$ (and even over the function
algebra~$C_b(M)$), with
\begin{equation} \label{eq:DFRMOD2}
 \| f \varphi \|_{\mathscr{E}}~
 \leqslant \; \|f\|_\infty \, \| \varphi \|_{\mathscr{E}}
 \qquad \mathrm{for}~~f \in C_b(M) \,,\, \varphi \in \mathscr{E}(E,\sigma)~,
\end{equation}
and for every point $m$ in~$M$, the evaluation map (Dirac delta function)
at~$m$,
\begin{equation} \label{eq:DELTA2}
 \begin{array}{cccc}
  \delta_m : & \mathscr{E}(E,\sigma)
             & \longrightarrow & \mathcal{E}_{\sigma(m)} \\[1mm]
             & \varphi
             &   \longmapsto   &      \varphi(m)
 \end{array}
\end{equation}
is a $C^*$-algebra homomorphism which, having a dense image, is onto.%
\footnote{These statements are easily derived by noting that, for any
$\, f \in C_b(M) \,$ and any point $m$ in $M$, multiplication by~$f$ as
a linear map from~$\mathscr{E}^0(E,\sigma)$ to~$\mathscr{E}^0(E,\sigma)$
and evaluation at~$m$ as a linear map from~$\mathscr{E}^0(E,\sigma)$ to~%
$\check{L}^1(E_m)$ are continuous not only in their own topologies,
but, according to equation~(\ref{eq:DFRN2}), also with respect to
their $C^*$-norms: this implies that they admit the required unique
continuous linear extensions.}
Finally, observing that $\mathscr{E}(E,\sigma)$ is still a $C^*$-algebra
without unit, the final step consists in passing to its multiplier algebra
$\, \mathscr{H}(E,\sigma) = M(\mathscr{E}(E,\sigma))$: then, once more,
$\mathscr{H}(E,\sigma)$ is not only a $C^*$-algebra but also a module
over the function algebra $C_0(M)$ (and even over the function
algebra~$C_b(M)$), with
\begin{equation} \label{eq:DFRMOD3}
 \| f \varphi \|_{\mathscr{H}}~
 \leqslant \; \|f\|_\infty \, \| \varphi \|_{\mathscr{H}}
 \qquad \mathrm{for}~~f \in C_b(M) \,,\, \varphi \in \mathscr{H}(E,\sigma)~.
\end{equation}
and for every point $m$ in~$M$, the evaluation map (Dirac delta function)
at~$m$,
\begin{equation} \label{eq:DELTA3}
 \begin{array}{cccc}
  \delta_m : & \mathscr{H}(E,\sigma)
             & \longrightarrow & \mathcal{H}_{\sigma(m)} \\[1mm]
             & \varphi
             &   \longmapsto   &      \varphi(m)
 \end{array}
\end{equation}
is a $C^*$-algebra homomorphism which, having a dense image, is onto.%
\footnote{The argumentation that permits these extensions remains to
be completed.}

Noting that the module structure and the evaluation maps are related by
the obvious formula
\begin{equation} \label{eq:DFRH1}
 \begin{array}{c}
  \delta_m(f \varphi)~=~f(m) \, \delta_m(\varphi) \\[1ex]
  \mathrm{for}~~f \in C_0(M)~~\mathrm{and}~~\varphi \in
  \mathscr{E}^0(E,\sigma)~\mathrm{or}~\mathscr{E}(E,\sigma)~
                          \mathrm{or}~\mathscr{H}(E,\sigma)~,
 \end{array}
\end{equation}
we see that $(\mathscr{E}^0(E,\sigma),(\delta_m)_{m \in M},M)$ is a
field of Banach $*$-algebras over~$M$ whereas $(\mathscr{E}(E,\sigma),
(\delta_m)_{m \in M},M)$ and $(\mathscr{H}(E,\sigma),(\delta_m)_{m \in M},M)$
are fields of $C^*$-algebras over~$M$.
As we have seen before, the first is certainly a \emph{continuous} field
of Banach $*$-algebras, while the question whether the last two are also
\emph{continuous} fields of $C^*$-algebras, which would provide the disjoint
unions
\begin{equation} \label{eq:CSTBUN}
 \mathcal{E}~= \bigcup_{m \in M}^{\scriptscriptstyle{\bullet}}
               \mathcal{E}_{\sigma(m)}
 \qquad \mathrm{and} \qquad
 \mathcal{H}~= \bigcup_{m \in M}^{\scriptscriptstyle{\bullet}}
               \mathcal{H}_{\sigma(m)}
\end{equation}
with natural topologies so as to turn them into total spaces of Fell
bundles such that $\mathscr{E}(E,\sigma)$ becomes the space of continuous
sections of $\mathcal{E}$ that vanish at infinity and $\mathscr{H}(E,\sigma)$
becomes the space of bounded continuous sections of~$\mathcal{H}$~%
\cite[Sect.~II.13.18, pp.~132-134]{FD}, is still under investigation.

Inspired by the construction in \cite{GP}, we propose to call the
$C^*$-algebra modules $\mathscr{E}(E,\sigma)$ and $\mathscr{H}(E,\sigma)$
the \emph{DFR-algebra} (associated to $(E,\sigma)$), \emph{without unit}
or \emph{with unit}, respectively.

Concerning irreducible representations, we have
\begin{te}
 Every irreducible representation of~$\mathscr{E}(E,\sigma)$, or equivalently,
 of~$\mathscr{H}(E,\sigma)$, is of the form $\, \pi_{[v_m]} \circ \delta_m^{}$,
 where $m$ is some point in~$M$ and $[v_m]$ is the class of a vector $\, v_m
 \in E_m \,$ in the quotient space $\, E_m/\ker \sigma(m)^\perp$, as in
 Theorem~\ref{vr}.
\end{te}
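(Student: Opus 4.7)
The strategy is to reduce the statement to Theorem~\ref{vr} by recognizing $\mathscr{E}(E,\sigma)$ as a $C_0(M)$-algebra and applying a standard fiber-decomposition argument. First I would check that the module action in equation~(\ref{eq:DFRMOD2}) provides a non-degenerate embedding of $C_0(M)$ into the center of $\mathscr{H}(E,\sigma) = M(\mathscr{E}(E,\sigma))$: centrality follows fiberwise, since $f(m)$ acts as a scalar in each $\mathcal{E}_{\sigma(m)}$, and non-degeneracy follows because any section in $\mathscr{E}^0(E,\sigma)$ vanishing at infinity is approximable in the $\mathscr{E}$-norm by products $f\varphi$ with $f \in C_c(M)$, via a cutoff applied to $m \mapsto \|\varphi(m)\|$.

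Given an irreducible representation $\pi$ of $\mathscr{E}(E,\sigma)$ on $\mathfrak{H}$, the equivalence recalled in Section~2 lets us extend uniquely to an irreducible representation $\tilde\pi$ of $\mathscr{H}(E,\sigma)$. Restricting $\tilde\pi$ to the central subalgebra $C_b(M)$, Schur's lemma forces $\tilde\pi|_{C_b(M)}$ to be a character $\chi$. Non-degeneracy of the $C_0(M)$-action then guarantees $\chi|_{C_0(M)} \neq 0$ (otherwise $\pi$ would annihilate $C_0(M) \cdot \mathscr{E}(E,\sigma)$, hence all of $\mathscr{E}(E,\sigma)$), so $\chi|_{C_0(M)} = \mathrm{ev}_m$ for a unique point $m \in M$.

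It then remains to factor $\pi$ through $\delta_m$. For any $f \in C_0(M)$ with $f(m) = 0$ and any $\varphi \in \mathscr{E}(E,\sigma)$, one has $\pi(f\varphi) = \chi(f)\pi(\varphi) = 0$, so $\pi$ annihilates the closed ideal $I_m \subseteq \mathscr{E}(E,\sigma)$ generated by such products. The crux is the identification $\ker\delta_m = I_m$: the inclusion $I_m \subseteq \ker\delta_m$ is immediate from~(\ref{eq:DFRH1}), while the reverse direction requires a cutoff/partition-of-unity argument that uses local triviality of the bundle $\check{\mathcal{L}}^1$ together with the decay at infinity of continuous sections, in order to approximate any $\varphi \in \ker\delta_m$ by elements of $I_m$. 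Since $\delta_m: \mathscr{E}(E,\sigma) \to \mathcal{E}_{\sigma(m)}$ is surjective (a $*$-homomorphism of $C^*$-algebras with dense image, as noted in the excerpt), the quotient $\mathscr{E}(E,\sigma)/\ker\delta_m$ is canonically $\mathcal{E}_{\sigma(m)}$; hence $\pi$ descends to an irreducible representation $\bar\pi$ of $\mathcal{E}_{\sigma(m)}$ with $\pi = \bar\pi \circ \delta_m$, and Theorem~\ref{vr} identifies $\bar\pi$ with $\pi_{[v_m]}$ for a unique $[v_m] \in E_m/\ker\sigma(m)^\perp$. The main obstacle I anticipate is the equality $\ker\delta_m = I_m$, whose proof must contend with the fact that the rank of $\sigma$ is allowed to jump, so that the local models available for $\check{\mathcal{L}}^1$ do not interact straightforwardly with the fiberwise star products entering the definition of the $\mathscr{E}$-norm; everything else is essentially formal once the $C_0(M)$-algebra structure is in place.
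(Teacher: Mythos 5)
Your argument follows essentially the same route as the paper's own proof: extend the irreducible representation to the multiplier algebra $\mathscr{H}(E,\sigma)$, use the central action of functions on~$M$ to obtain a character, conclude that $\pi$ factors through the evaluation map $\delta_m$, and invoke Theorem~\ref{vr}. If anything, you are more careful than the paper at exactly the two points you flag: the paper simply asserts that the characters of $C_b(M)$ are the points of~$M$ (they are the points of the Stone--\v{C}ech compactification, and your non-degeneracy argument over $C_0(M)$ is the correct way to exclude characters at infinity), and it asserts without further justification that $\pi(f\varphi)=f(m)\,\pi(\varphi)$ forces $\pi$ to vanish on $\ker\delta_m$, i.e.\ it leaves your crux $\ker\delta_m=I_m$ just as unproved as you do.
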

\begin{proof}
 Let $\pi$ be a irreducible representation of $\mathscr{E}(E,\sigma)$, which
 extends uniquely to an irreducible representation of its multiplier algebra
 $\mathscr{H}(E,\sigma)$.
 Then since $\mathscr{E}(E,\sigma)$ is a module over $C_b(M)$, we have a
 canonical $C^*$-algebra homomorphism
 \[
  C_b(M)~\longrightarrow~Z \bigl( \mathscr{H}(E,\sigma) \bigr)
 \]
 of~$C_b(M)$ into the center $Z \bigl( \mathscr{H}(E,\sigma) \bigr)$ of~%
 $\mathscr{H}(E,\sigma)$.
 Thus $\pi$ restricts to an irreducible representation of $C_b(M)$, which
 must be one-dimensional and given by a character.
 But the characters of~$C_b(M)$ are the points of~$M$, so there is a point
 $\, m \in M \,$ such that
 \[
  \pi(f)~=~f(m) \, 1 \qquad \mathrm{for}~~f \in C_b(M)~,
 \]
 and hence
 \[
  \pi(f \varphi)~=~f(m) \, \pi(\varphi)
  \qquad \mathrm{for}~~f \in C_b(M) \,,\, \varphi \in \mathscr{E}(E,\sigma)~,
 \]
 which implies that $\pi$ must vanish on the kernel of $\delta_m$.
 Therefore, $\pi$ factors over $\ker \delta_m$, that is, $\pi =
 \pi_m \circ \delta_m \,$ where $\pi_m$ is an irreducible
 representation of $\mathcal{E}_{\sigma(m)}$.
 Now apply Theorem~\ref{vr}.
\end{proof}

\section{Examples}

\subsection{Homogeneous Vector Bundles}

A first important special case of the construction outlined in the previous
section occurs when the underlying manifold~$M$ and Poisson vector bundle~%
$(E,\sigma)$ are homogeneous.
More specifically, assume that $G$ is a Lie group which acts transitively
(and properly) on~$M$ as well as on~$E$ and such that $\sigma$ is $G$-%
invariant: this means that writing
\begin{equation}
 \begin{array}{ccc}
  G \times M & \longrightarrow &     M     \\[1mm]
     (g,m)   &   \longmapsto   & g \cdot m
 \end{array}
 \qquad \mathrm{and} \qquad
 \begin{array}{ccc}
  G \times E & \longrightarrow &     E     \\[1mm]
     (g,u)   &   \longmapsto   & g \cdot u
 \end{array}
\end{equation}
for the respective actions, where the latter is linear along the fibers
and hence induces an action
\begin{equation}
 \begin{array}{ccc}
  G \times \bwedge^2 E & \longrightarrow & \bwedge^2 E \\[1mm]
          (g,u)        &   \longmapsto   &  g \cdot u
 \end{array}
\end{equation}
so that
\begin{equation}
 \sigma(g \cdot m)~=~g \cdot \sigma(m)
 \qquad \mathrm{for}~~g \in G \,,\, m \in M~.
\end{equation}
Choosing a reference point $m_0$ in~$M$ and denoting by $H$ the stability
group of~$m_0$ in~$G$, by $\mathbb{E}$ the fiber of~$E$ over $m_0$ and by
$\sigma_0$ the value of the bivector field $\sigma$ at~$m_0$, we can
identify $M$ with the homogeneous space $G/H$, $E$ with the vector
bundle $\, G \times_H \mathbb{E} \,$ associated to~$G$, viewed as a
principal $H$-bundle over~$G/H$, and the representation of~$H$ on~%
$\mathbb{E}$ obtained from the action of~$G$ on~$E$ by appropriate
restriction, and $\sigma$ with the bivector field obtained from
$\sigma_0$ by the association process.
Explicitly, for example, we identify the left coset $\, gH \in G/H \,$
with the point $\, g \cdot m_0 \in M \,$ and the equivalence class~%
$\, [g,u_0] = [gh,h^{-1} \cdot u_0] \in G \times_H \mathbb{E}$ \linebreak
with the vector $\, g \cdot u_0 \in E$.
As a result, we see that if the representation of~$H$ on~$\mathbb{E}$
extends to a representation of~$G$, then the associated bundle 
$\, G \times_H \mathbb{E} \,$ is globally trivial: an explicit
trivialization is given by
\begin{equation}
 \begin{array}{ccc}
       G \times_H \mathbb{E}      & \longrightarrow & M \times \mathbb{E}
  \\[1mm]
  [g,u_0] = [gh,h^{-1} \cdot u_0] &   \longmapsto   & (gH,g^{-1} \cdot u_0)
 \end{array}
\end{equation}
Of course, in this case, $G$-invariance implies that $\sigma$ has
constant rank and hence the Heisenberg algebroid becomes a Lie algebra
bundle, the Heisenberg groupoid becomes a Lie group bundle and the
fields of $C^*$-algebras $(\mathscr{E}(E,\sigma),(\delta_m)_{m \in M},M)$
and $(\mathscr{H}(E,\sigma),(\delta_m)_{m \in M},M)$ over~$M$ are
associated to locally trivial bundles of $C^*$-algebras $\mathcal{E}$
and $\mathcal{H}$ over~$M$ (see equation~(\ref{eq:CSTBUN})).
And if the representation of~$H$ on~$\mathbb{E}$ extends to a
representation of~$G$, all these bundles will even be globally
trivial.

This special situation prevails in the case of the original DFR-algebra,
where $\mathbb{E}$ is four-dimensional Minkowski space $\mathbb{R}^{1,3}$,
$G$ is (possibly up to a two-fold covering) the Lorentz group $\mathrm{O}%
(1,3)$, $H$ is its intersection with the symplectic group $\mathrm{Sp}%
(4,\mathbb{R})$, so~$H$ is an abelian Lie group of type $\, \mathbb{R}
\times \mathrm{U}(1)$, and $\sigma_0$ is the standard symplectic form,
defined by the matrix $\bigl( \begin{smallmatrix} 0 & 1_2 \\ -1_2 & 0
\end{smallmatrix} \bigr)$.

\subsection{Poisson Manifolds} 

Let $M$ be a Poisson manifold with Poisson tensor~$\sigma$.
In this case, it is natural to assume $E$ to be the tangent
bundle $TM$ of~$M$.
Here, we have one piece of additional information: the requirement
that the Poisson tensor should be integrable (i.e., that the Schouten
bracket of~$\sigma$ with itself should vanish) implies that $M$ admits
a \emph{foliation} into symplectic submanifolds, called its \emph%
{symplectic leaves}~\cite{SF}.
Then since the Poisson tensor has constant (and maximal) rank along
each leaf, it is clear from the construction of the previous section
that, upon restriction to each leaf $S$, the Heisenberg algebroid
becomes a Lie algebra bundle, the Heisenberg groupoid becomes a Lie
group bundle and the fields of $C^*$-algebras $(\mathscr{E}(TM|_S,
\sigma_S),(\delta_s)_{s \in S},S)$ and $(\mathscr{H}(TM|_S,\sigma_S),
(\delta_s)_{s \in S},S)$ over~$S$ are associated to locally trivial
bundles of $C^*$-algebras $\mathcal{E}_S$ and $\mathcal{H}_S$
over~$S$.

\section{Outlook}

Our first goal when starting this investigation was to find an appropriate
mathematical setting for geometrical generalizations of the DFR model.
Here, we report on first results in this direction, but there are of
course various steps that still have to be taken, for example:
\begin{itemize}
 \item Establish closer contact between the Heisenberg Lie algebroid and
       Heisenberg groupoid on the one hand and the fields of $C^*$-algebras
       that give rise to the DFR-algebra on the other hand.
       This will most likely require extending the concepts of unitaries
       and of affiliated unbounded elements from $C^*$-algebras to fields
       of $C^*$-algebras and/or $C^*$-modules over $C_0(M)$ and/or $C_b(M)$.
 \item The same goes for the concept of states.
 \item Another important question is how this construction relates to
       \emph{deformation quantization}, both with regard to its formal
       version, such as Fedosov's construction for symplectic manifolds
       or Kontsevich's theorem for Poisson maifolds, and with regard to
       Rieffel's strict deformation quantization.
\end{itemize}
We are fully aware of the fact that these questions are predominantly of
mathematical nature: the physical interpretation is quite another matter.
But to a certain extent this applies even to the original DFR-model, since
it is not clear how to extend the interpretation of the commutation
relations postulated in~\cite{DFR}, in terms of uncertainty relations,
to other space-time manifolds, or even to Minkowski space in dimensions
$\neq 4$.
In addition, it should not be forgotten that, even classically, space-time
coordinates are \emph{not} observables: this means that the basic axiom of
algebraic quantum field theory according to which observables should be
described by (local) algebras of a certain kind (such as $C^*$-algebras
or von Neumann algebras) does \emph{not at all} imply that in quantum
gravity one should replace classical space-time coordinate functions
by noncommuting operators.
To us, the basic question seems to be: \emph{How can we formulate space-time
uncertainty relations, in the sense of obstructions to the possibility of
localizing events with arbitrary precision, in terms of \textbf{observables}?}
That of course stirs up the question:
\emph{How do we actually \textbf{measure} the geometry of space-time when
quantum effects become strong?}

\section*{Acknowledgments}

We would like to thank J.C.A.~Barata for suggesting the original problem
of extending the DFR model to higher dimensions and analyzing its classical
limit using coherent states, which provided the motivation for all the
developments reported here.
We are also grateful to P.L.~Ribeiro for his many valuable suggestions
and remarks.

% \begin{appendix}

% \section{Bundles of $C^*$-Algebras}

% In this appendix, we collect some definitions and facts about bundles of
% Banach spaces and, in particular, of $C^*$-algebras, often referred to
% as Fell bundles, that are pertinent to the present paper.

% Proceeding historically, we begin with Dixmier's concept of a continuous
% ``field'' of Banach spaces and then explain how this relates to the notion
% of a Fell bundle ...

% Let $X$ be a locally compact topological space.%
% \footnote{We follow the convention that the Hausdorff condition is part
% of the definition of compact or locally compact topological spaces.}
% A \emph{``field''} or \emph{''bundle'' of Banach spaces over}\/~$X$
% is a family $(E_x)_{x \in X}$ of Banach spaces $E_x$ 

% Slightly adapting the original definition of Dixmier~\cite[Defin.~10.1.2,
% pp.~211-212]{DI}, 

% \end{appendix}

\end{document}